\newcommand{\labbel}{\label} 
\newtheorem{theorem}{Theorem}%[section]
\newtheorem{lemma}[theorem]{Lemma}
\newtheorem{corollary}[theorem]{Corollary}
\newtheorem*{theorem*}{Theorem}
\newtheorem*{corollary*}{Corollary}
\theoremstyle{definition}
\newtheorem{problem}[theorem]{Problem}
\theoremstyle{remark}
\newtheorem{remark}[theorem]{Remark}
\begin{document}
 
\title{Another characterization of congruence distributive varieties}

\author{Paolo Lipparini} 
\address{Dipartimento Ulteriore di Matematica\\Viale della  Ricerca
 Scientifica\\Universit\`a di Roma ``Tor Vergata'' 
\\I-00133 ROME ITALY}
\urladdr{http://www.mat.uniroma2.it/\textasciitilde lipparin}

\keywords{congruence distributive variety; Maltsev condition;
congruence identity}

\subjclass[2010]{Primary 08B10; Secondary 08B05}
\thanks{Work performed under the auspices of G.N.S.A.G.A. Work 
partially supported by PRIN 2012 ``Logica, Modelli e Insiemi''.
The author acknowledges the MIUR Department Project awarded to the
Department of Mathematics, University of Rome Tor Vergata, CUP
E83C18000100006.}

\begin{abstract}
We provide a Maltsev characterization 
of congruence distributive varieties by showing that
a variety $\mathcal {V}$ is congruence distributive 
if and only if the congruence identity 
$\alpha \cap (\beta \circ \gamma \circ \beta )  \subseteq  
\alpha \beta \circ \gamma \circ \alpha \beta \circ \gamma  \dots $
($k$ factors)
holds in $\mathcal {V}$, for some natural number $k$. 
\end{abstract} 

\maketitle  

\centerline {\today}

\smallskip

We assume the reader is familiar with basic notions of lattice theory and
of universal algebra. A small portion of \cite{MMT}
is sufficient as a prerequisite.

A lattice is  distributive if and only if 
it satisfies the identity 
$\alpha (\beta + \gamma )  \leq \alpha \beta + \gamma $.
It follows that an algebra $\mathbf A$ 
is congruence distributive  
if and only if, for all congruences
$\alpha$, $\beta$ and $\gamma$ of $\mathbf A$ 
and for every $h$,  
the inclusion 
$\alpha (\beta \circ_h \gamma )  \subseteq  \alpha \beta + \gamma $.
 holds.
Here juxtaposition denotes intersection,
$+$ is join in the congruence lattice and
$\beta \circ_h \gamma$ is
$\beta \circ \gamma \circ \beta \circ \gamma \dots $
with $h$ factors ($h-1$ occurrences of  $ \circ $).
 
Considering now a variety $\mathcal {V}$,
it follows from standard arguments in the theory
of Maltsev conditions that $\mathcal {V}$ is congruence distributive 
if and only if, for every $h$, there is some $k$
such that the congruence identity   
\begin{equation}\labbel{hk}      
\alpha (\beta \circ_h \gamma )  \subseteq  \alpha \beta \circ_k \gamma 
 \end{equation}
holds in $\mathcal {V}$.
The naive  expectation 
(of course, motivated by \cite{JD})
that 
the congruence identity 
\begin{equation}\labbel{kk}     
\alpha (\beta \circ \gamma )  \subseteq  \alpha \beta \circ_k \gamma ,
\quad \text{ for some $k$},
  \end{equation}
 is enough to imply congruence distributivity is false. 
Indeed, by \cite[Theorem 9.11]{HMK}, 
a locally finite variety $\mathcal {V}$ satisfies \eqref{kk}
if and only if  $\mathcal {V}$ omits types
{\bf 1, 2, 5}.
More generally, with no finiteness assumption,
Kearnes and Kiss \cite[Theorem 8.14]{KK} 
proved that a variety $\mathcal {V}$ 
satisfies  \eqref{kk} if and only if $\mathcal {V}$ is 
join congruence semidistributive. Many other interesting equivalent
conditions are presented in \cite{HMK,KK}.

In spite of the above results, we show that the next step is enough, namely,
if we take $h=3$ in identity \eqref{hk},
we get a condition implying congruence distributivity.
After a short elementary proof
relying on \cite{D,JD},
in Remark \ref{ct} we sketch an alternative argument
which relies only on \cite{contol}.  
Then, by working directly with the terms associated to the
Maltsev condition arising from 
\eqref{hk} for $h=3$, we show that this instance
of  \eqref{hk}
implies
$\alpha (\beta \circ \gamma \circ \beta )  \subseteq
\alpha \beta \circ_r \alpha \gamma  $,
for some $r < \frac{k^2}{2} $.  

\begin{theorem} \labbel{piccolo}
A variety $\mathcal {V}$ is congruence distributive 
if and only if the identity 
\begin{equation}\labbel{1} 
\alpha (\beta \circ \gamma \circ \beta ) \subseteq 
\alpha \beta +  \gamma  
  \end{equation}    
holds in every congruence lattice of algebras in  $\mathcal {V}$.
 \end{theorem}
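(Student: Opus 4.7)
The necessary direction is immediate: since $\beta \circ \gamma \circ \beta \subseteq \beta + \gamma$ in any lattice, CD gives $\alpha(\beta \circ \gamma \circ \beta) \subseteq \alpha(\beta + \gamma) = \alpha\beta + \alpha\gamma \subseteq \alpha\beta + \gamma$.

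For the converse, my plan is first to translate \eqref{1} into a Maltsev condition. Writing $\alpha\beta + \gamma = \bigcup_k (\alpha\beta \circ_k \gamma)$ and evaluating the hypothesis in the free algebra $F_\mathcal{V}(x,y,z,w)$ at the congruences $\alpha$, $\beta$, $\gamma$ generated by $\{(x,w)\}$, $\{(x,y),(z,w)\}$, $\{(y,z)\}$ respectively, a compactness argument yields a fixed $k$ and $4$-ary terms $t_0 = x, t_1, \ldots, t_k = w$ such that: collapsing $\alpha$ on the even-indexed steps gives $t_i(x,y,z,x) = t_{i+1}(x,y,z,x)$; collapsing $\beta$ on the same steps gives $t_i(x,x,z,z) = t_{i+1}(x,x,z,z)$; and collapsing $\gamma$ on the odd-indexed steps gives $t_i(x,y,y,w) = t_{i+1}(x,y,y,w)$.

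From here I would pursue the short route flagged as relying on \cite{D,JD}. Substituting $y = z$ in the $\alpha$-identity and $w = x$ in the $\gamma$-identity both collapse to $t_i(x,y,y,x) = t_{i+1}(x,y,y,x)$, so the chain $t_i(x,y,y,x)$ is constant at $x$; combined with the $\beta$-identity, this exhibits the $t_i$ as Day's congruence modularity terms of \cite{D}, so $\mathcal{V}$ is congruence modular. Moreover, since $\beta \circ \gamma \subseteq \beta \circ \gamma \circ \beta$ by reflexivity of $\beta$, the hypothesis \eqref{1} automatically gives $\alpha(\beta \circ \gamma) \subseteq \alpha\beta + \gamma$, which on a congruence modular variety is known (Jónsson--Day, \cite{JD}) to entail congruence distributivity.

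The main obstacle I anticipate lies in the sharpening promised after the theorem, where the $t_i$ are to be processed directly into the stronger identity $\alpha(\beta \circ \gamma \circ \beta) \subseteq \alpha\beta \circ_r \alpha\gamma$ with $r < k^2/2$. Here the $t_i$-identities are parity-dependent and no single substitution resolves the $\alpha\beta$- and $\gamma$-steps symmetrically, so the construction has to interleave the three substitution patterns above and, in all likelihood, interpolate intermediate terms between even and odd indices; the quadratic bound on $r$ reflects the heaviness of this bookkeeping. It is precisely this combinatorial work that makes the $h = 3$ hypothesis succeed where, as the excerpt recalls, the $h = 2$ case stops at join semidistributivity.
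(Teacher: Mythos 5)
Your argument is correct and follows the same skeleton as the paper's proof: first congruence modularity via Day, then distributivity via J\'onsson. The differences are only in execution. For the modularity half, the paper stays at the level of congruence identities, substituting $\alpha\gamma$ for $\gamma$ in \eqref{1} and invoking \cite{D}; your detour through $\mathbf F_{\mathcal V}(4)$, compactness, and the verification that the extracted terms satisfy $t_i(x,y,y,x)=x$ is the term-level form of the same observation --- it is precisely what the paper records in Corollary \ref{main}(ii),(iii) and Remark \ref{mod} --- so nothing is lost there. The one point you should make explicit is the final step: what \cite{JD} actually gives is that the congruence identity $\alpha(\beta\circ\gamma)\subseteq\alpha\beta+\alpha\gamma$ (with $\alpha\gamma$, not $\gamma$, in the join) implies distributivity within a variety, so your assertion that $\alpha(\beta\circ\gamma)\subseteq\alpha\beta+\gamma$ together with congruence modularity is ``known'' to entail distributivity elides a needed bridge. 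The bridge is one line, and it is exactly what the paper writes: since $\alpha\beta\leq\alpha$, the modular law yields $\alpha(\beta\circ\gamma)\subseteq\alpha(\alpha\beta+\gamma)=\alpha\beta+\alpha\gamma$, after which J\'onsson's theorem applies. With that line inserted your proof is complete; your closing paragraph about the quadratic bound concerns Theorem \ref{level}, not this statement, and plays no role in proving it.
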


 \begin{proof}
If $\mathcal {V}$ is congruence distributive, then
$\alpha (\beta \circ \gamma \circ \beta ) \subseteq 
\alpha ( \beta + \gamma ) \leq 
\alpha \beta +  \gamma  $.

For the nontrivial direction, 
assume that \eqref{1} holds in $\mathcal {V}$.
By taking $\alpha \gamma $ in place of $\gamma$ 
in \eqref{1} we get 
$\alpha (\beta \circ \alpha \gamma \circ \beta ) \subseteq 
\alpha \beta +  \alpha \gamma  $.
 Day \cite{D} has showed that this identity implies congruence modularity
within a variety.
From \eqref{1} and congruence modularity we get
$\alpha (\beta \circ \gamma \circ \beta ) \subseteq 
\alpha (\alpha \beta +  \gamma ) = \alpha \beta + \alpha \gamma  $
and, since trivially
$\alpha (\beta \circ \gamma ) \subseteq \alpha (\beta \circ \gamma \circ \beta ) $,
we obtain
$\alpha (\beta \circ \gamma )  \subseteq \alpha \beta + \alpha \gamma$. 
Within a variety this identity implies congruence distributivity
by \cite{JD}.
 \end{proof}  

It is standard to express Theorem \ref{piccolo}
in terms of a  Maltsev condition.

\begin{corollary} \labbel{main}
A variety 
$\mathcal {V}$ is congruence distributive 
if and only if there is some $k$
such that  any one of the following equivalent conditions
hold.
\begin{enumerate}[(i)]   
\item 
$\mathcal {V}$ satisfies the congruence identity   
\begin{equation}\labbel{hk3} 
\alpha (\beta \circ \gamma \circ \beta )  \subseteq  \alpha \beta \circ_k \gamma .
 \end{equation}
\item
The identity \eqref{hk3} holds in 
$\mathbf F _{ \mathcal V } ( 4 ) $, the free
algebra in $\mathcal {V}$ generated by 
four elements $x,y,z,w$;
actually, it is equivalent to assume that     
\eqref{hk3} holds in 
$\mathbf F _{ \mathcal V } ( 4 ) $
in the special case when
when $\alpha= Cg(x,w)$,
$\beta=Cg((x,y),(z,w))$ and
$\gamma= Cg(y,z)$.
\item
$\mathcal V$ has $4$-ary  terms 
$d_0, \dots, d_{k}$ such that the following 
equations are valid in $\mathcal V$:
  \begin{enumerate}    
\item 
$x=d_0(x,y,z,w)$;
\item
$d_i(x,x,w,w)=d_{i+1}(x,x,w,w)$, for $i$ even;
\item
$d_i(x,y,z,x)=d_{i+1}(x,y,z,x)$, for $i$ even;
\item
$d_i(x,y,y,w)=d_{i+1}(x,y,y,w)$, for $i$ odd, and
\item 
$d_{k}(x,y,z,w)=w$.
  \end{enumerate}  
  \end{enumerate} 
 \end{corollary}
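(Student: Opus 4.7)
The plan is to present this as a routine Maltsev-condition reformulation of Theorem \ref{piccolo}. I would first establish the equivalences (i) $\Leftrightarrow$ (ii) $\Leftrightarrow$ (iii) and then connect them to congruence distributivity via Theorem \ref{piccolo}. The implication (i) $\Rightarrow$ (ii) is an immediate specialization.

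For (ii) $\Rightarrow$ (iii), I would work in $\mathbf{F}_{\mathcal V}(4)$ with the stated congruences. Observe that $(x,w) \in \alpha$ trivially and that $x \, \beta \, y \, \gamma \, z \, \beta \, w$, so $(x,w) \in \alpha \cap (\beta \circ \gamma \circ \beta)$; hypothesis (ii) therefore provides elements $u_0 = x, u_1, \dots, u_k = w$ with consecutive pairs related alternately by $\alpha\beta$ and $\gamma$. Writing $u_i = d_i(x,y,z,w)$, the endpoint conditions become (a) and (e). The relation $u_i \, \alpha \, u_{i+1}$ (for $i$ even), interpreted in the quotient $\mathbf{F}_{\mathcal V}(4)/\alpha$ where $w$ collapses to $x$, translates into identity (c); similarly the $\beta$-step gives (b) (collapsing $x$ to $y$ and $z$ to $w$), and the $\gamma$-step for odd $i$ gives (d) (collapsing $z$ to $y$). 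This is the standard translation between congruence conditions on a free algebra and equational consequences in $\mathcal V$.

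For (iii) $\Rightarrow$ (i), given congruences $\alpha, \beta, \gamma$ on some algebra in $\mathcal V$ and a witness $a \, \alpha \, b$ with $a \, \beta \, c \, \gamma \, e \, \beta \, b$, I would set $u_i := d_i(a,c,e,b)$. Identities (b) and (c) imply $u_i \, \beta \, u_{i+1}$ and $u_i \, \alpha \, u_{i+1}$ for even $i$ (for instance, $u_i = d_i(a,c,e,b) \, \alpha \, d_i(a,c,e,a) = d_{i+1}(a,c,e,a) \, \alpha \, u_{i+1}$ using $a \, \alpha \, b$), while (d) gives $u_i \, \gamma \, u_{i+1}$ for odd $i$; combined with (a) and (e), this exhibits $(a,b) \in \alpha\beta \circ_k \gamma$.

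The link with congruence distributivity is now straightforward: (i) $\Rightarrow$ congruence distributivity because $\alpha\beta \circ_k \gamma \subseteq \alpha\beta + \gamma$, so Theorem \ref{piccolo} applies; conversely, Theorem \ref{piccolo} guarantees under congruence distributivity that $(x,w) \in \alpha\beta + \gamma = \bigcup_k (\alpha\beta \circ_k \gamma)$ in $\mathbf{F}_{\mathcal V}(4)$, and some $k$ therefore realises this membership, establishing (ii). The entire argument is standard Maltsev-condition bookkeeping and presents no substantive obstacle; the only point requiring care is matching the parity conventions so that the $\alpha\beta, \gamma$ alternation in $\alpha\beta \circ_k \gamma$ aligns correctly with the even/odd clauses (b)--(d).
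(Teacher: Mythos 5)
Your proposal is correct and follows essentially the same route as the paper: the paper also reduces everything to the standard free-algebra/term translation for (i)$\Leftrightarrow$(ii)$\Leftrightarrow$(iii) (which it delegates to the references in the style of Day) and then closes the loop with Theorem~\ref{piccolo}, exactly as you do. Your write-up merely makes the standard bookkeeping explicit, and the details (choice of congruences in $\mathbf F_{\mathcal V}(4)$, parity of the clauses, substitution $u_i=d_i(a,c,e,b)$) all check out.
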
 

\begin{proof} 
(i) $\Rightarrow $  (ii) is trivial;
(ii) $\Rightarrow $  (iii) 
and (iii) $\Rightarrow $  (i)
 are standard; for example,
there is no substantial difference with respect to \cite{D}.
See, e.~g., \cite{G,CV,ntcm} for further details, or \cite{Pal,W}
for a more general form of the arguments.
Thus we have that (i) - (iii) are equivalent, for any given  $k$.

Clearly congruence distributivity implies 
the second statement in (ii), for some $k$;
moreover identity \eqref{hk3} in (i)
implies identity \eqref{1}, hence congruence distributivity
follows from Theorem \ref{piccolo}. 
\end{proof}

\begin{remark} \labbel{ct}  
It is possible to give a direct proof
that clause (i) in Corollary  \ref{main}
implies congruence distributivity by using a theorem 
from \cite{contol} and without resorting to 
\cite{D,JD}.
By \cite[Theorem 3 (i) $\Rightarrow $  (iii)]{contol},
a variety $\mathcal {V}$ satisfies identity \eqref{hk3}
for congruences if and only if $\mathcal {V}$ satisfies the same identities
when $\alpha$, $\beta$ and $\gamma$ are \emph{representable tolerances}.
A tolerance $\Theta$ is \emph{representable}
if it can be expressed as $\Theta = R \circ R ^\smallsmile $,
for some admissible relation $R$,  where $R ^\smallsmile $ 
denotes the \emph{converse} of $R$.
 To show congruence distributivity, notice that 
 the relation
$\Delta_m = \beta   \circ_m  \gamma   $
is a representable tolerance, for every odd $m$.   
By induction on $m$,
it is easy to see that 
the identity \eqref{hk3}, when interpreted
for representable tolerances,
implies 
$\alpha (\Delta_m \circ \gamma   \circ \Delta_m )
  \subseteq  \alpha \beta \circ _p  \gamma $, for every 
odd $m$ and some appropriate $p$ depending on $m$. 
In particular, we get that, for every $h$, there is some $p$ such that  
\begin{equation}\labbel{hp}     
 \alpha ( \beta \circ_h \gamma ) \subseteq \alpha \beta \circ _p  \gamma ,
 \end{equation}
hence also
$ \alpha ( \beta \circ_h \gamma ) \subseteq 
\alpha (\alpha \beta \circ _p  \gamma ) 
= \alpha \beta \circ \alpha ( \gamma \circ _{p-1} \alpha \beta ) $.
 Taking now 
$\gamma$ in place of $\beta$,
$\alpha \beta $ in place of $\gamma$ 
and $p-1$ in place of  $h$ in \eqref{hp},
we get 
$\alpha ( \gamma \circ _{p-1} \alpha \beta ) \subseteq 
\alpha \gamma \circ _q \alpha \beta $, for some $q$,
thus $\alpha ( \beta \circ_h \gamma ) \subseteq
 \alpha \beta \circ _{q+1}  \alpha \gamma$.
In particular, 
 $\alpha ( \beta \circ_h \gamma ) \subseteq
 \alpha \beta +  \alpha \gamma$, for every $h$,
hence we get congruence distributivity, by a remark at the beginning. 
 Compare \cite{ntcm} for corresponding arguments. 
If one works out the details, one obtains that
if $k \leq 2^t$, $t \geq 1$ and $\ell \geq 2$,
then identity \eqref{hk3} implies
$\alpha (\beta  \circ _{2^\ell -1}  \gamma ) 
 \subseteq 
 \alpha \beta \circ _{2^{s}+1} \alpha  \gamma$, 
with $s=(t-1)^2( \ell -1) +1 $,
a rather large number of factors on the right.
We shall present explicit details in the Appendix.

We are now going to show that we can obtain a lighter 
bound on the right using different methods.
 \end{remark} 

\begin{remark} \labbel{mod}
Notice that if some sequence of terms satisfies Clause (iii) in
Corollary \ref{main},
then the terms  satisfy also
  \begin{enumerate}  
  \item[(f)]
$x=d_i(x,y,y,x)$, for every $i \leq k$.  
   \end{enumerate} 
This follows immediately by induction from (a), (c) and (d).
From the point of view of congruence identities,
this corresponds to taking $ \alpha \gamma $
in place of $\gamma$ in \eqref{1}, as we did
in the proof of Theorem \ref{piccolo}.
At the level of Maltsev conditions,
this gives a proof that Clause (iii) in Corollary  \ref{main}     
implies congruence modularity,
since the argument shows that the terms
$d_0, \dots, d_k$ obey Day's conditions \cite{D} 
for congruence modularity. 
\end{remark}

\begin{theorem} \labbel{level} 
If some variety $\mathcal {V}$ satisfies the congruence identity
\eqref{hk3} $\alpha (\beta \circ \gamma \circ \beta )  \subseteq  \alpha \beta \circ_k \gamma$, for some $k \geq 3$, then $\mathcal {V}$ satisfies
\begin{equation*}
\alpha (\beta \circ \gamma \circ \beta )  \subseteq  \alpha \beta \circ_r \alpha \gamma,
 \end{equation*}      
where $r=  \frac{k^2-4k+9}{2}$ for $k$ odd, and
$r=  \frac{k^2-3k+4}{2}$ for $k$ even.
\end{theorem}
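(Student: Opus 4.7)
The plan is to construct explicit $4$-ary terms $e_0,\ldots,e_r$ witnessing the Maltsev condition equivalent to $\alpha(\beta \circ \gamma \circ \beta) \subseteq \alpha\beta \circ_r \alpha\gamma$, starting from the terms $d_0,\ldots,d_k$ supplied by Corollary \ref{main}(iii) together with the extra identity (f) $d_i(x,y,y,x)=x$ coming from Remark \ref{mod}. By the same translation as in Corollary \ref{main}(ii)$\Leftrightarrow$(iii), the new terms must satisfy $e_0(x,y,z,w)=x$, $e_r(x,y,z,w)=w$, the identity $e_i(x,x,w,w)=e_{i+1}(x,x,w,w)$ for $i$ even (the $\beta$ part of $\alpha\beta$), the identity $e_i(x,y,y,w)=e_{i+1}(x,y,y,w)$ for $i$ odd (the $\gamma$ part of $\alpha\gamma$), and, crucially, $e_i(x,y,z,x)=e_{i+1}(x,y,z,x)$ for \emph{every} $i$ (the $\alpha$ part common to both).

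The naive choice $e_i:=d_i$ already verifies everything except this new $\alpha$-identity at odd indices, precisely because (c) of Corollary \ref{main}(iii) is stated only for $i$ even. The strategy is therefore to refine the chain by inserting, between $d_i$ and $d_{i+1}$ for each odd $i$, an interpolating block of new terms that collapses to $x$ under the identification $w\mapsto x$. The interpolating terms are produced by applying the sequence $d_0,\ldots,d_k$ (suitably shifted so that the first internal step matches the required $\alpha\beta$ vs.\ $\alpha\gamma$ alternation) to a $4$-tuple built from $d_i(x,y,z,w)$, the common value $d_i(x,y,y,w)=d_{i+1}(x,y,y,w)$ obtained from (d), and $d_{i+1}(x,y,z,w)$, with the remaining coordinate filled by $x$ or $w$ so that identity (f) forces the collapse under $w\mapsto x$; this is what produces the missing $\alpha$-identity at each new step, while the outer $d_j$ simultaneously provides the appropriate $\beta$- or $\gamma$-identity via the original (b) or (d).

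Each of the $\lfloor k/2\rfloor$ odd-to-even transitions in the original chain is thereby replaced by a chain of roughly $k$ intermediate steps, and summing gives the quadratic total length $r\approx k^2/2$; the precise parity-dependent expressions $r=(k^2-4k+9)/2$ for $k$ odd and $r=(k^2-3k+4)/2$ for $k$ even are expected to fall out of a careful count of the endpoints shared between consecutive blocks and of how the global alternation meshes with the fixed endpoints $e_0=x$, $e_r=w$. I expect the main obstacle to be the simultaneous verification of all three identity conditions at the interpolating transitions: the new $\alpha$-identity is forced there by (f), while the $\beta$- or $\gamma$-identity must come from the outer $d_j$'s via (b) and (d), and these two requirements constrain the nesting from opposite sides. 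Once a construction consistent with both is set up, each individual identity should be a direct consequence of (a)--(f); the combinatorial layout of the nested tuple, and in particular keeping track of which slot the $x$ or $w$ is placed in, will be the delicate point.
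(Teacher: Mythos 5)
Your general strategy --- re-using the sequence $d_0,\dots,d_k$ (equivalently, re-applying \eqref{hk3} with $\gamma$ and $\alpha\beta$ in the roles of $\beta$ and $\gamma$) to repair the odd steps, at quadratic total cost --- is indeed the right one, and is essentially the term-level rendering of the paper's argument; but the skeleton of your chain is wrong, and this is a genuine gap rather than bookkeeping. In the target chain \emph{every} consecutive pair must satisfy the $\alpha$-identity $e_i(x,y,z,x)=e_{i+1}(x,y,z,x)$, and this relation is transitive; so if you keep $d_i(x,y,z,w)$ and $d_{i+1}(x,y,z,w)$ (for $i$ odd) as consecutive stations and merely insert a block between them, you are forced to have $d_i(x,y,z,x)\approx d_{i+1}(x,y,z,x)$ as an identity of $\mathcal V$, which is not a consequence of (a)--(f) (if it were, you would get $r=k$ outright, not a quadratic bound). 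Worse, your interpolating terms are required to collapse to $x$ under $w\mapsto x$, so already the junction step with $d_i$ would need $d_i(x,y,z,x)\approx x$; condition (f) only gives $d_i(x,y,y,x)=x$. The specific tuple you propose to feed into the $d_j$'s --- built from $d_i(x,y,z,w)$, $d_i(x,y,y,w)$, $d_{i+1}(x,y,z,w)$ and a constant $x$ or $w$ --- inherits the same defect: the slots are not pairwise $\alpha$-related in the pattern that (c), (d) and (f) need, precisely because $d_i$ and $d_{i+1}$ (odd $i$) are not $\alpha$-related to each other or to $x$.

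The fix, which is what the paper does, is to change the stations themselves: for odd $i$ use the specialisations $d_i(x,y,y,w)$ (the elements $d_i(a,b,b,d)$ in the paper's notation), which by (f) all collapse to $x$ under $w\mapsto x$ and hence are mutually $\alpha$-related; the original terms $d_{i+1}(x,y,z,w)$, $d_{i+2}(x,y,z,w)$ appear only \emph{inside} the substituted tuple, not as stations. Concretely, the pair $\bigl(d_i(x,y,y,w),\,d_{i+2}(x,y,y,w)\bigr)$ lies in $\alpha(\gamma\circ\alpha\beta\circ\gamma)$ via $d_{i+1}(x,y,z,w)$ and $d_{i+2}(x,y,z,w)$ (here (c) at the even index $i+1$ supplies the $\alpha$ in the middle $\alpha\beta$ step), and applying the $d_j$'s to the tuple $\bigl(d_i(x,y,y,w),\,d_{i+1}(x,y,z,w),\,d_{i+2}(x,y,z,w),\,d_{i+2}(x,y,y,w)\bigr)$ fills each such gap with an $\alpha\gamma\circ_k\alpha\beta$ chain. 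Finally, the stated values of $r$ are not a routine afterthought: they come from a special first step through $d_1(x,y,z,w)$, from the absorption of adjacent $\alpha\gamma$ factors when concatenating blocks, and, for $k$ even, from a different ending that uses the converse form of \eqref{hk3}; none of this is carried out in your proposal, so the precise bounds $r=\frac{k^2-4k+9}{2}$ and $r=\frac{k^2-3k+4}{2}$ remain unproved.
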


 \begin{proof}
By Corollary   \ref{main},
we have terms as given by (iii).
Suppose that 
$(a,d) \in \alpha ( \beta   \circ  \gamma  \circ \beta )$
in some algebra in $\mathcal {V}$.
Thus 
$a \mathrel { \alpha } d $
and
$ a \mathrel   \beta   b \mathrel   \gamma  c  \mathrel   \beta  d$,
for certain elements $b $ and $ c$.
We claim that 
\begin{equation}\labbel{gabg}    
(d_i(a,b,b,d),d _{i+2}(a,b,b,d))
\in  \alpha ( \gamma \circ \alpha \beta \circ \gamma ),
   \end{equation}
for every odd index $i < k-1$.
Indeed,   
\begin{equation*}
  d_i(a,b,b,d) \mathrel \alpha  
d_i(a,b,b,a) = a =d _{i+2}(a,b,b,a)
\mathrel \alpha  d _{i+2}(a,b,b,d) ,
 \end{equation*}    
by (f) in the above remark.
Moreover, still assuming $i$ odd,  
\begin{gather*}    
   d_{i+1}(a,b,c,d)  \mathrel { \beta }
d_{i+1}(a,a,d,d)=
d_{i+2}(a,a,d,d) \mathrel \beta 
 d _{i+2}(a,b,c,d), \text{  and}
\\
 d_{i+1}(a,b,c,d)  \mathrel { \alpha  }
d_{i+1}(a,b,c,a)=
d_{i+2}(a,b,c,a) \mathrel \alpha  
 d _{i+2}(a,b,c,d), \text{  hence  }  
\\
  d_i(a,b,b,d) {{\hspace {2pt}}{=}{\hspace {2pt}}} % space adjusted by hand
 d_{i+1}(a,b,b,d)  \mathrel \gamma 
d_{i+1}(a,b,c,d)  \mathrel {\alpha \beta } d _{i+2}(a,b,c,d)
\mathrel \gamma   d _{i+2}(a,b,b,d) ,
\end{gather*} 
thus \eqref{gabg} follows. 
From \eqref{gabg} and 
\eqref{hk3} with $\gamma$ in place of $\beta$
and $\alpha \beta $ in place of $\gamma$, 
we get
\begin{equation*}\labbel{gabg'}    
(d_i(a,b,b,d),d _{i+2}(a,b,b,d))
\in  \alpha  \gamma \circ_k \alpha \beta,
   \end{equation*}
for every odd index $i$.

Arguing as above, 
$a \mathrel {\alpha \beta } d_1(a,b,c,d) \mathrel { \alpha \gamma }  d_1(a,b,b,d)  $.
If $k$ is odd, then 
\begin{equation*}      
d_{k-2}(a,b,b,d) = d_{k-1}(a,b,b,d) 
\mathrel { \alpha \beta } d_{k-1}(a,a,d,d) =
 d_{k}(a,a,d,d) = d,
\end{equation*}
 thus 
the elements
$d_1(a,b,b,d), d_3(a,b,b,d), \dots, d_{k-2}(a,b,b,d)$ witness
\begin{equation} \labbel{bago}    
(a,d) \in \alpha \beta \circ \alpha \gamma \circ 
( \alpha  \gamma \circ_k \alpha \beta ) ^{ \frac{k-3}{2}  } \circ \alpha \beta  =
\alpha  \beta  \circ_{r } \alpha \gamma ,
  \end{equation}
  for $r = \frac{k^2-4k+9}{2} $.
In the computation of $r$ 
we have used that, say,
$( \alpha  \gamma \circ_k \alpha \beta ) \circ ( \alpha  \gamma \circ_k \alpha \beta )
=  \alpha  \gamma \circ_{2k-1} \alpha \beta $,
$( \alpha  \gamma \circ_k \alpha \beta )^3 =
 \alpha  \gamma \circ_{3k-2} \alpha \beta$, etc.,
since $k$ is odd, hence there are adjacent occurrences
of $\alpha \gamma $ which  join into one.
In the general case, 
$( \alpha  \gamma \circ_k \alpha \beta )^t =
 \alpha  \gamma \circ_{t(k-1) + 1} \alpha \beta$,
for $k$ odd.
Finally,
we have two adjacent   occurrences of 
$\alpha \gamma $  at the second and third
 place in \eqref{bago}, too. From the above
observations we get the value $ \frac{k^2-4k+9}{2} $ of
$r$.

On the other hand, if $k$ is even, then 
$d_{k-1}(a,b,b,d) = d_k(a,b,b,d) =d $.
Moreover, 
since $d_1(a,b,c,d) \mathrel { \alpha \gamma }  d_1(a,b,b,d)$,
we have by \eqref{gabg} 
\begin{equation} \labbel{blu}
    (d_1(a,b,c,d) , d_3(a,b,b,d)) \in
\alpha \gamma \circ  \alpha ( \gamma \circ  \alpha \beta \circ \gamma )  =
 \alpha ( \gamma \circ  \alpha \beta \circ \gamma )   ,
 \end{equation} 
hence we can consider 
$d_1(a,b,c,d)$ in place of  $ d_1(a,b,b,d)$.
By considering the converse of 
\eqref{hk3},  we get
\begin{equation}\labbel{baba}    
\alpha ( \beta \circ \gamma \circ \beta ) \subseteq 
\gamma \circ_k \alpha \beta ,    \text{  if $k$ is even.} 
 \end{equation}  
Taking $\gamma$ in place of $\beta$ 
and $\alpha \beta $ in place of $\gamma$ 
in \eqref{baba}, then from \eqref{blu} we get
\begin{equation*}
(d_1(a,b,c,d) , d_3(a,b,b,d))
\in  \alpha  \beta  \circ_k \alpha \gamma .
 \end{equation*}    
We can go on the same way, using alternatively
\eqref{baba} and \eqref{hk3}
and considering the elements
$d_1(a,b,c,d) , d_3(a,b,b,d), d_5(a,b,b,d), \dots, 
d_{k-3}(a,b,b,d)$, 
getting 
  $(a,d) \in 
( \alpha  \beta  \circ_k \alpha \gamma  ) 
\circ_{ \frac{k-2}{2} }
( \alpha  \gamma   \circ_k \alpha \beta )  =
\alpha  \beta  \circ_{ r} \alpha \gamma $, for $r = \frac{k^2-3k+4}{2}  $.
 \end{proof}

We expect that the evaluation of $r$ in Theorem \ref{level}
can be further improved, but we have no guess as to what extent.  

One can consider an identity intermediate between
\eqref{kk} and  \eqref{hk} 
by shifting the occurrence of $\alpha$ the other way,
with respect to \eqref{hk3}.

\begin{problem} \labbel{shift}
Within a variety, is the following identity equivalent to congruence 
distributivity?  
\begin{equation}\labbel{hk3sh} 
\alpha (\beta \circ \gamma \circ \beta )  \subseteq   \beta + \alpha \gamma 
 \end{equation}
 \end{problem}

We are not claiming that the above problem is difficult;
in any  case, it is not solved by the present note.
As usual, a variety satisfies \eqref{hk3sh}
if and only if there is some $k$ such that   
$\alpha (\beta \circ \gamma \circ \beta )  \subseteq   \beta \circ_k \alpha \gamma $
holds in $\mathcal V$.
Let us  also notice that the identity 
\eqref{hk3sh} implies congruence distributivity
if and only if it implies congruence modularity.
Indeed, if 
\eqref{hk3sh} implies congruence modularity,
then we get distributivity arguing as in the last two sentences of the proof
of Theorem \ref{piccolo}.

\smallskip 

{\scriptsize The author considers that it is highly  inappropriate, 
 and strongly discourages, the use of indicators extracted from the list below
  (even in aggregate forms in combination with similar lists)
  in decisions about individuals (job opportunities, career progressions etc.), 
 attributions of funds  and selections or evaluations of research projects. \par }

\section*{Appendix} \labbel{Appendix} 

In this appendix we justify the values reported in 
Remark \ref{ct}.

\begin{lemma} \labbel{lemap}
The conditions in Corollary \ref{main}
are also equivalent to: 

(iv) For every algebra 
$\mathbf A \in \mathcal V$, the following identity 
\begin{equation}\labbel{nte}    
\alpha ( \Delta  \circ  \gamma  \circ \Delta) \subseteq 
 \alpha \Delta  \circ_ \kappa \gamma,  
  \end{equation}
holds, for all congruences $\alpha$ and $ \gamma $ 
on $\mathbf A$
and every tolerance  $\Delta$  on $\mathbf A$ such that  there exists
an admissible relation $R$ on $\mathbf A$ for which 
$\Delta = R \circ R^\smallsmile$.
 \end{lemma}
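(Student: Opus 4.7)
The plan is to show that (iv) is equivalent to the conditions of Corollary \ref{main}; one can take $\kappa = k$. The direction (iv) $\Rightarrow$ (i) is trivial, since any congruence $\beta$ is itself representable: taking $R = \beta$ gives $R \circ R^\smallsmile = \beta \circ \beta = \beta$, so \eqref{nte} specializes to \eqref{hk3}.

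For the substantive direction (iii) $\Rightarrow$ (iv), I would give a direct argument using the Day-like terms $d_0, \dots, d_k$ furnished by clause (iii) of Corollary \ref{main}. Suppose $(a, d) \in \alpha(\Delta \circ \gamma \circ \Delta)$ in some $\mathbf A \in \mathcal V$, with $\Delta = R \circ R^\smallsmile$. Pick $b, c$ realizing $a \mathrel \Delta b \mathrel \gamma c \mathrel \Delta d$, together with mediators $b', c'$ such that $a \mathrel R b'$, $b \mathrel R b'$, $c \mathrel R c'$, $d \mathrel R c'$. The candidate chain witnessing $(a, d) \in \alpha \Delta \circ_k \gamma$ is $e_i := d_i(a, b, c, d)$ for $0 \leq i \leq k$, anchored by $e_0 = a$ and $e_k = d$ through (a) and (e).

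It then suffices to verify that $e_i \mathrel{\alpha \Delta} e_{i+1}$ whenever $i$ is even and $e_i \mathrel \gamma e_{i+1}$ whenever $i$ is odd. The three ingredients are: for $i$ odd, $e_i \mathrel \gamma e_{i+1}$ follows by bridging through $d_i(a, b, b, d) = d_{i+1}(a, b, b, d)$, using (d) together with $b \mathrel \gamma c$; for $i$ even, $e_i \mathrel \alpha e_{i+1}$ by the standard substitution of $a$ for $d$ via $a \mathrel \alpha d$ and equation (c); and, for $i$ even, $e_i \mathrel \Delta e_{i+1}$ by observing that the quadruples $(a, b, c, d)$ and $(b', b', c', c')$ are componentwise $R$-related, so that admissibility of $R$ together with equation (b) produces $e_i \mathrel R d_i(b', b', c', c') = d_{i+1}(b', b', c', c') \mathrel{R^\smallsmile} e_{i+1}$.

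The only real subtlety is this last verification: it is the sole place where representability is used, and the argument would break down for an arbitrary tolerance $\Delta$, since without a common $R$-witness one could at best derive $e_i \mathrel{\Delta \circ \Delta} e_{i+1}$. The representation $\Delta = R \circ R^\smallsmile$ supplies precisely the common mediators $b', c'$ needed to collapse the two $d_i$-images by (b) into a single $\Delta$-step, and it is exactly this mechanism that forces the hypothesis in (iv) to be stated for \emph{representable} tolerances rather than arbitrary ones.
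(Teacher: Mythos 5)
Your proposal is correct and follows essentially the same route as the paper: the chain $e_i = d_i(a,b,c,d)$, the use of the mediators $b',c'$ supplied by $\Delta = R \circ R^\smallsmile$ together with equation (b) to collapse the even steps into a single $\alpha\Delta$-step, and equations (c), (d) for the $\alpha$- and $\gamma$-parts are exactly the paper's argument for (iii) $\Rightarrow$ (iv), with (iv) $\Rightarrow$ (i) handled trivially in both. Your closing remark on where representability is genuinely needed is also in line with the paper's reliance on \cite{contol}.
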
 

\begin{proof} 
The equivalence of (i) and (iv)
is a special case of
\cite[Theorem 3 (i) $\Rightarrow $  (iii)]{contol}.
For the reader's convenience, we sketch a direct proof of
(iii) $\Rightarrow $  (iv), while, of course,
(iv) $\Rightarrow $  (i) is obvious.

So let us assume that we have terms as given by
(iii) and that $\alpha$, $\Delta$ and $\gamma$ satisfy the assumptions
in (iv). Suppose that 
$(a,d) \in \alpha ( \Delta  \circ  \gamma  \circ \Delta)$,
thus 
$a \mathrel { \alpha } d $
and
$ a \mathrel   \Delta  b \mathrel   \gamma  c  \mathrel   \Delta d$,
for certain $b,c \in A$.
Moreover, by the assumption on $\Delta$,
$a \mathrel { R} b' \mathrel { R ^\smallsmile } b  $
and
$c \mathrel { R} c' \mathrel { R ^\smallsmile } d  $,
for certain $b', c' \in A$.
We claim that the elements
  $d_i(a,b,c,d) $,
for $i=0, \dots, k$,  
witness
that 
$(a,d) \in \alpha \Delta  \circ_ \kappa \gamma$.
For example, let us check that 
$d_i(a,b,c,d)  \mathrel \Delta d_{i+1}(a,b,c,d) $,
for $i$ even. Indeed,
 $d_i(a,b,c,d)  \mathrel R 
d_i(b',b',c',c') =
d_{i+1}(b',b',c',c') \mathrel { R ^\smallsmile } 
 d_{i+1}(a,b,c,d) $,
since, say, $b \mathrel R b'$
and since, by assumption,
$\Delta = R \circ R^\smallsmile$.
 All the rest is standard and simpler.
We have proved that (i) - (iv) are equivalent, for every $k$.
\end{proof}    

We now prove that (iv) implies
$\alpha (\beta + \gamma )  \leq \alpha \beta + \gamma $,
an identity equivalent to distributivity.
We shall actually  show that 
if $k$ is even, say, $k=2r$, then  
(iv) implies
\begin{equation}\labbel{bip} 
\alpha (\beta  \circ _{2^\ell -1}  \gamma ) 
 \subseteq  \alpha \beta \circ _{2r^{\ell-1}} \gamma ,
\quad
\text{ for every $\ell \geq 2$. } 
  \end{equation}   
Clearly, it is no loss of generality to assume that 
$k$ is even, since if the identity \eqref{nte} holds for some
odd $k$, then   \eqref{nte} holds for $k+1$, as well.
Moreover, if 
$(a,b) \in \alpha (\beta + \gamma )$ 
in some algebra, then   
$(a,b) \in  \alpha (\beta  \circ _{2^\ell -1}  \gamma ) $,
for some sufficiently large $\ell$ depending on $a$ and $b$.
Hence, in order to show
congruence distributivity,  it is enough to prove the identity \eqref{bip}. 

The proof of \eqref{bip} is by induction on 
$\ell \geq 2$. 
The base case $\ell= 2$ is the special case 
$\Delta = \beta $ of identity \eqref{nte}.  
Suppose that   the identity \eqref{bip} holds for some
$\ell \geq 2$ and 
set $\Delta= \beta  \circ _{2^{\ell} -1}  \gamma$.
By the inductive hypothesis, we have
$\alpha \Delta  \subseteq  \alpha \beta \circ _{2r^{\ell-1}} \gamma$. 

If $R= \beta  \circ _{2^{\ell-1}}  \gamma$,
 then 
$\Delta = R \circ R ^\smallsmile $.
Indeed, $\ell \geq 2$, thus 
$2^{\ell-1}$ is even, hence the last factor in the definition
of $R$ is $\gamma$ and $\gamma$ is also the first factor
of $R ^\smallsmile  $. 
Since $\gamma$  is a congruence, we have
$\gamma \circ \gamma = \gamma $, namely, one factor absorbs in 
$R \circ R ^\smallsmile $,
thus $R \circ R ^\smallsmile $ has 
$2^{\ell} -1$ factors, 
hence
$\Delta = R \circ R ^\smallsmile $.
Thus we can apply (iv) and we have
\begin{equation*} 
\alpha (\beta  \circ _{2^{\ell+1} -1}  \gamma )
= \alpha (\Delta \circ \gamma \circ \Delta  )
\subseteq ^{\eqref{nte}}  
\alpha \Delta \circ _{2r} \gamma 
\subseteq  ^{\rm ih}
(\alpha \beta \circ _{2r^{\ell-1}} \gamma) \circ _{2r} \gamma 
= \alpha \beta \circ_{2r ^ \ell } \gamma ,  
\end{equation*}  
where the superscripts \eqref{nte}
and ``ih''  mean that we have applied, respectively, identity 
\eqref{nte} and the inductive hypothesis
and where in the last identity we have used 
again $\gamma \circ \gamma = \gamma $,
noticing that   $2r^{\ell-1}$ is even, hence the last factor
in the expression
$\alpha \beta \circ _{2r^{\ell-1}} \gamma$ is $\gamma$.

The induction step is thus complete, hence we have proved \eqref{bip}.

In the next corollary we state explicitly some informations
which can be obtained from the above arguments. 

\begin{corollary} \labbel{cor}
If some variety $\mathcal {V}$ satisfies one of the equivalent 
conditions in Theorem \ref{main} with $k \leq 2r$,
then  $\mathcal {V}$ satisfies the identity \eqref{bip},
for every $\ell \geq 2$.

If in addition
$k \leq 2^p$, for some $p \geq 1$, then,
for every $\ell \geq 2$,  $\mathcal {V}$ satisfies
\begin{equation*}
\alpha (\beta  \circ _{2^\ell -1}  \gamma ) 
 \subseteq 
 \alpha \beta \circ _{2^{s}+1} \alpha  \gamma, 
 \end{equation*} 
where $s=(p-1)^2( \ell -1) +1 $. In particular, taking $\ell=2$,
we get that $\mathcal {V}$ satisfies  
\begin{equation*}
\alpha (\beta  \circ   \gamma \circ \beta ) 
 \subseteq 
 \alpha \beta \circ _{2^{t}+1} \alpha  \gamma,
 \end{equation*} 
for $t=(p-1)^2 +1$. 
 \end{corollary}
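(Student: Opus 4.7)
My plan is to handle the two claims in turn.

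The first claim is essentially already established: the induction on $\ell$ carried out in the paragraphs immediately preceding the corollary proves identity \eqref{bip} under the assumption $k = 2r$ with $k$ even. Since condition (iv) weakens when $k$ grows, the hypothesis $k \leq 2r$ lets the same induction run with $2r$ in place of $k$, so nothing new is needed beyond pointing back to the preceding argument.

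For the second claim, the strategy is to invoke the first claim, then use the absorption trick from Remark \ref{ct} to convert a $\gamma$-factor into an $\alpha\gamma$-factor, and apply the first claim once more. Setting $r = 2^{p-1}$ (so $2r = 2^p \geq k$), the first claim delivers $\alpha(\beta \circ_{2^\ell - 1} \gamma) \subseteq \alpha\beta \circ_m \gamma$ with $m = 2r^{\ell-1} = 2^{(p-1)(\ell-1)+1}$. What makes the argument go through is that $m$ is itself a power of two, so $m - 1 = 2^{\ell'} - 1$ for $\ell' := (p-1)(\ell-1) + 1$, keeping the setup in the form required by a second invocation of the first claim.

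The absorption step rewrites $\alpha \cap (\alpha\beta \circ_m \gamma)$ as $\alpha\beta \circ \alpha(\gamma \circ_{m-1} \alpha\beta)$, using only $\alpha\beta \subseteq \alpha$ just as in Remark \ref{ct}. I would then apply the first claim to the inner factor, with $\gamma$ and $\alpha\beta$ in the roles of $\beta$ and $\gamma$ and $\ell'$ in place of $\ell$, producing $\alpha\gamma \circ_{2^{(p-1)(\ell'-1)+1}} \alpha\beta$. The arithmetic $(p-1)(\ell'-1)+1 = (p-1)^2(\ell-1)+1 = s$ reads off the claimed exponent, and prepending $\alpha\beta \circ$ converts the alternating chain of length $2^s$ starting with $\alpha\gamma$ into one of length $2^s + 1$ starting with $\alpha\beta$, i.e.\ $\alpha\beta \circ_{2^s + 1} \alpha\gamma$. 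The $\ell = 2$ specialization is then immediate substitution, yielding $t = (p-1)^2 + 1$.

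I do not foresee any serious obstacle: the argument is bookkeeping on top of ingredients already in place. The only points that require care are checking the power-of-two property of $m$ (which is what legitimises the second application of the first claim) and tracking the alternation of factors when the leading $\alpha\beta$ is prepended to a chain that already begins with $\alpha\gamma$.
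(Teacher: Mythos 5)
Your proposal is correct and follows essentially the same route as the paper: the first claim by reference to the preceding induction, then the absorption identity $\alpha(\alpha\beta\circ_m\gamma)=\alpha\beta\circ\alpha(\gamma\circ_{m-1}\alpha\beta)$ (the paper's \eqref{bipbip}, which uses transitivity of $\alpha$ together with $\alpha\beta\subseteq\alpha$), a second application of \eqref{bip} with $\gamma$, $\alpha\beta$ and $\ell'=q=(p-1)(\ell-1)+1$ substituted, and the same exponent bookkeeping $2r^{\,q-1}=2^s$. Your observation that $m-1=2^{\ell'}-1$ is exactly the point the paper makes to legitimise the second application, so nothing further is needed.
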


 \begin{proof} 
The first statement is given by the above proof 
of \eqref{bip}.

To prove the second statement, 
it is no loss of generality to assume
that $k=2r$ and $k = 2^p$.
Notice that from \eqref{bip} we get   
\begin{equation}\labbel{bipbip} 
\alpha (\beta  \circ _{2^\ell -1}  \gamma ) 
 \subseteq 
 \alpha ( \alpha \beta \circ _{2r^{\ell-1}} \gamma )
=
\alpha \beta \circ \alpha ( \gamma  \circ _{2r^{\ell-1}-1} \alpha \beta ),
  \end{equation}   
since $\alpha$ is supposed to be a congruence, in particular, transitive.
From $k=2r$ and  $k=2^p$, we get 
$r=2 ^{p-1} $, hence 
  $2r^{\ell-1}-1 = 2^q -1$,
for $q= (p-1)( \ell -1) +1$. 
Applying \eqref{bip}
with $\gamma$ in place of $\beta$,
with $\alpha \beta $ in place of $\gamma$ 
and $q$ in place of $\ell$,
we get 
\begin{equation*}
\alpha ( \gamma  \circ _{2r^{\ell-1}-1} \alpha \beta ) 
=
\alpha ( \gamma  \circ _{2^{q}-1} \alpha \beta )
\subseteq 
\alpha \gamma  \circ _{2r^{q-1}} \alpha \beta 
 \end{equation*}    
and the conclusion follows from
\eqref{bipbip}, since 
$2r^{q-1} = 2 (2 ^{p-1})^{(p-1)( \ell -1)} =2^s $.
\end{proof}


\begin{thebibliography}{10}

\bibitem{D} 
 A. Day, \emph{A characterization of modularity for congruence lattices of algebras},
 Canad. Math.
Bull. \textbf{12}, 167--173 (1969).

\bibitem
{G}  H.-P. Gumm, \emph{Geometrical methods in congruence
   modular algebras}, Mem. Amer. Math. Soc. \textbf{45} (1983).


\bibitem{HMK}
D. Hobby, R. McKenzie, \emph{The structure of finite algebras},
Contemp. Math.  \textbf{76} (1988).

\bibitem
{JD} B. J{\'o}nsson, \emph{Algebras whose congruence lattices
   are distributive}, Math. Scand. \textbf{21}, 110--121  (1967).

\bibitem
{CV} B. J{\'o}nsson, \emph{Congruence varieties},
   Algebra Universalis \textbf{10}, 355--394  (1980).


\bibitem{KK}
 K.\ A.\ Kearnes, E.\ W.\  Kiss, \emph{The shape of
   congruence lattices}, Mem.\ Amer.\ Math.\ Soc.\ \textbf{222} (2013).


\bibitem{contol} 
P. Lipparini, 
 \emph{From congruence identities to tolerance identities},
   Acta Sci. Math. (Szeged) \textbf{73}, 31--51  (2007).

\bibitem{ntcm}
P.\ Lipparini, 
\emph{On the number of terms witnessing congruence modularity},
 arXiv:1709.06023v2, 1--23 (2017/2019).

\bibitem
{MMT}  R. N. McKenzie, G. F. McNulty,  
W. F. Taylor,  \emph{Algebras, Lattices, Varieties. Vol. I},
 Wadsworth \& Brooks/Cole
   Advanced Books \& Software (1987),
corrected reprint with additional bibliography, AMS Chelsea Publishing/American
Mathematical Society (2018).

\bibitem{Pal}  A. Pixley, \emph{Local Mal'cev conditions},
 Canad. Math. Bull. \textbf{15}, 559--568 (1972). 



\bibitem{W}   R. Wille, \emph{Kongruenzklassengeometrien},
 Lecture Notes in Mathematics \textbf{113} 
(1970).

\end{thebibliography}
\end{document}